\titleformat{\section}[block]
{\normalfont\Large\filcenter\bfseries}{\thesection.}{.33em}{}
\titleformat{\subsection}[runin]
{\normalfont\normalsize\bfseries}{\thesubsection.}{.33em}{}[.]
\theoremstyle{plain}
\newtheorem{theorem}{Theorem}
\newtheorem{lemma}[theorem]{Lemma}
\theoremstyle{definition}
\newtheorem{example}[theorem]{Example}
\newcommand{\bbR}{\mathbb R}
\newcommand{\bbZ}{\mathbb Z}
\newcommand{\thed}{d}
\newcommand{\theD}{D}
\newcommand{\then}{n}
\newcommand{\thenpo}{{n+1}}
\newcommand{\thek}{k}
\newcommand{\them}{m}
\newcommand{\themz}{{n}}
\newcommand{\themo}{{n'}}
\newcommand{\themp}{{m'}}
\newcommand{\theq}{q}
\newcommand{\theqp}{q'}
\newcommand{\qni}[2]{q}
\newcommand{\pni}[2]{p}
\newcommand{\mni}[2]{m}
\newcommand{\kni}[2]{k}
\newcommand{\knist}[2]{k_*}
\newcommand{\Thetani}[2]{\Theta}
\newcommand{\Pn}{P_\then}
\newcommand{\pr}{\operatorname{pr}}
\newcommand{\cyclic}[1]{\bbZ/#1}
\newcommand{\kn}{k_\then}
\newcommand{\conn}{\operatorname{conn}}
\renewcommand{\ker}{\operatorname{ker}}
\newcommand{\stdsimp}[1]{\Delta^{#1}}
\newcommand{\horn}[2]{%
\mbox{$\xy
<0pt,-\the\fontdimen22\textfont2>;p+<.1em,0em>:
{\ar@{-}(0,0.1);(3,7)},
{\ar@{-}(3,7);(6,0.1)},
{\ar@{-}(3.2,7);(6.2,0.1)},
{\ar@{-}(3.4,7);(6.4,0.1)}
\endxy\;\!{}^{#1}_{#2}$}}
\newcommand{\xra}[1]{\xrightarrow{#1}}
\newcommand{\xlra}[1]{\xrightarrow{\ #1\ }}
\newcommand{\htpy}[1][]{\mathbin{\:\!\!\xymatrix@1@C=15pt{{}\ar@{~>}[r]^{#1} & {}}}}
\newcommand{\pbsize}{15pt}
\newcommand{\pboffset}{.5}
\newcommand{\xycorner}[3]{\save #2="a";#1;"a"**{}?(\pboffset);"a"**\dir{-};#3;"a"**{}?(\pboffset);"a"**\dir{-}\restore}
\newcommand{\pb}{\xycorner{[]+<\pbsize,0pt>}{[]+<\pbsize,-\pbsize>}{[]+<0pt,-\pbsize>}}
\newcommand{\xymatrixb}[2]{%
\xy*i\xybox{\xymatrix{#2}};p!DR*!DR\xybox{\xymatrix{#1 \\ #2}}\endxy}
\newcommand{\leftbox}[2]{{}\phantom{#1} \save []+L*+<.5pc>!!<0pt,\the\fontdimen22\textfont2>!L{#1#2} \restore}
\newcommand{\rightbox}[2]{{}\phantom{#2} \save []+R*+<.5pc>!!<0pt,\the\fontdimen22\textfont2>!R{#1#2} \restore}
\title{Decidability of the extension problem for maps into odd-dimensional spheres\thanks{%
The research was supported by the grant P201/11/0528 of the Czech Science Foundation (GA \v CR).\newline
2010 \emph{Mathematics Subject Classification}. Primary 55Q05; Secondary 55S35.\newline
\emph{Key words and phrases}. Homotopy class, computation, higher difference.
}}
\author
{Luk\'a\v{s} Vok\v{r}\'{\i}nek}
\begin{document}

\maketitle

\begin{abstract}
In a recent paper~\cite{hardness}, it was shown that the problem of existence of a continuous map $X \to Y$ extending a given map $A \to Y$ defined on a subspace $A \subseteq X$ is undecidable, even for $Y$ an even-dimensional sphere. In the present paper, we prove that the same problem for $Y$ an odd-dimensional sphere is decidable. More generally, the same holds for any $d$-connected target space $Y$ whose homotopy groups $\pi_k Y$ are finite for $k>2d$.
\end{abstract}

\section{Introduction}

The main object of study of the present paper is the \emph{extension problem}. Given spaces $X$, $Y$ and a map $f \colon A \to Y$ defined on a subspace $A \subseteq X$, it questions the existence of a continuous extension
\[\xymatrix@C=3pc{
	A \ar@{c->}[d]_-\iota \ar[r]^-f & Y \\
	X \ar@{-->}[ru]_-g
}\]
If $Y$ is allowed non-simply connected, this problem is undecidable by a simple reduction to the word problem in groups. Thus, we restrict ourselves to the situation of a \emph{simply connected} $Y$.

In~\cite{Steenrod}, Steenrod expressed a hope that the extendability problem would be algorithmically solvable. It was proved in~\cite{stable} that this is indeed the case if one restricts to a suitably ``stable'' situation, i.e.\ if $\dim X \leq 2\conn Y+1$. The algorithm of that paper depended on computations with \emph{abelian groups} of homotopy classes of maps that are not available unstably. Later, the authors showed in~\cite{hardness} that the previous positive result was very much the best possible: the extension problem with $\dim X > 2\conn Y+1$ is undecidable, even for such a simple target space as $S^{d+1}$ with $d+1$ even. This undecidability result has implications to other problems, namely, \cite{robust} shows the undecidability of the problem of existence of a robust zero of a given PL-map $K \to \bbR^{d+2}$, again for $d$ even.

It may thus come as a bit of a surprise that the last two problems with $d+1$ odd are decidable -- this is the content of Theorem~\ref{t:extend} below. It applies to $Y=S^{\thed+1}$, $\thed+1$ odd, since in this case, $\pi_\then S^{\thed+1}$ is finite for $\then>\thed+1$. Again, \cite{robust} implies the decidability of the problem of existence of a robust zero of a given PL-map $K \to \bbR^{d+2}$, $d$ odd.

\begin{theorem}\label{t:extend}
There exists an algorithm that, given a pair of finite simplicial sets $(X,A)$, a finite $\thed$-connected simplicial set $Y$, $\thed\geq 1$, with homotopy groups $\pi_\then Y$ finite for all $2\thed<\then<\dim X$ and a simplicial map $f\colon A\to Y$, decides the existence of a continuous extension $g\colon X\to Y$ of $f$.
\end{theorem}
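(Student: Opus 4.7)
The plan is to extend $f$ skeleton by skeleton while maintaining at each stage an effective description of the set of homotopy classes of extensions obtained so far, combining the stable-range algorithm of \cite{stable} in the lower dimensions with an obstruction-theoretic analysis in the higher dimensions, where the finiteness hypothesis on $\pi_\then Y$ becomes the decisive input.

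Set $F_\then := [A \cup X^{(\then)}, Y]_f$ for the set of homotopy classes of extensions of $f$ to the $\then$-skeleton of $X$ relative to $A$. Standard obstruction theory controls the transition $F_{\then-1} \to F_\then$: for each representative $f_{\then-1} \colon A \cup X^{(\then-1)} \to Y$ there is an obstruction cocycle in $C^\then(X, A; \pi_{\then-1} Y)$ whose vanishing is equivalent to the existence of an extension to $A \cup X^{(\then)}$, and, if non-empty, the fibre of $F_\then \to F_{\then-1}$ over $[f_{\then-1}]$ is a torsor under a quotient of $C^\then(X, A; \pi_\then Y)$.

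First I would invoke \cite{stable} to obtain an effective description of $F_{2\thed+1}$ as (emptiness, or) a coset of a subgroup of a finitely generated abelian group, since the new relative cells involved lie in dimensions $\leq 2\thed+1 = 2\conn Y + 1$. Then, for $\then = 2\thed+2, \ldots, \dim X$, the hypothesis forces $\pi_{\then-1} Y$ to be finite, and hence $C^\then(X, A; \pi_{\then-1} Y)$ and $C^\then(X, A; \pi_\then Y)$ are both finite: at each such step the transition shrinks $F_{\then-1}$ by imposing a finite obstruction and then enlarges by a finite torsor.

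The main technical obstacle, and the source of the ``higher differences'' of the title, is that past the stable range the obstruction map $F_{\then-1} \to H^\then(X, A; \pi_{\then-1} Y)$ is no longer a group homomorphism but a polynomial map of bounded degree, whose successive higher differences are multilinear maps that can be extracted from the $k$-invariants of the Postnikov tower of $Y$. The heart of the proof should therefore consist in developing an effective calculus for such polynomial maps from a coset in a finitely generated abelian group into a finite abelian group, in particular a decision procedure for whether zero lies in their image; because the target is finite, I expect this to reduce to a finite search of depth bounded by the polynomial degree. Once this step is in place, iterating the resulting description through the remaining skeleta yields a decision procedure for the non-emptiness of $F_{\dim X}$, and hence for the existence of an extension $X \to Y$ of $f$.
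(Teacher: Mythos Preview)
Your strategy points in the right direction, but the heart of the argument---the ``effective calculus for polynomial maps from a coset in a finitely generated abelian group into a finite abelian group''---is precisely the non-trivial content, and you leave it as an expectation. Two specific difficulties: first, the obstruction map is not literally polynomial of bounded degree (cohomology operations need not be), so one must formulate and prove the correct mod-$q$ statement; second, your inductive scheme requires at each stage $n>2\thed+1$ not merely to decide whether $0$ lies in the image of the obstruction but to produce a usable description of its \emph{preimage} in $F_{n-1}$ to feed into the next stage, and the zero locus of such a map in an infinite coset need not itself be a coset or a finite union of cosets, so the structure compounds as $n$ grows.

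The paper organizes things differently in a way that sidesteps this bookkeeping. It filters $Y$ by its Postnikov tower rather than $X$ by skeleta, so the single infinite object in play is the abelian group $[X,P_{2\thed}]$. The higher-difference calculus (Lemma~\ref{l:difference}) is used not to analyse obstructions directly but to construct an \emph{action} of $P_{2\thed}$ on every higher Postnikov stage $P_n$ (Theorem~\ref{t:action}): the lemma furnishes, for any map $k$ into a $\cyclic{q}$-valued simplicial group, a $\theta$ with $k(x+\theta y)\equiv k(x)+D_{q,\ell}k(x;y,\ldots,y)\pmod q$, where the correction term vanishes whenever some $y_i=0$ and hence, for $\ell$ large, lifts through the path fibration so that the action extends to the next stage. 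Once this action exists, the reduction to a finite problem happens in a single stroke at the end: any extension $g\colon X\to P_{\theD-1}$ can be translated by $\Theta\cdot\ker R'$ so that its projection to $[X,P_{2\thed}]$ lands in a precomputed finite set $H_0$, and all lifts of each element of $H_0$ to $P_{\theD-1}$ form a finite, enumerable set by the finiteness of the intervening $\pi_n Y$. No inductive tracking of preimages is required.
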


We do not have any bounds on the running time of such an algorithm. In the light of the \#P-hardness of the computation of the homotopy group $\pi_kY$ when $k$ is a part of the input (in unary), see~\cite{hardness}, one should not expect that this algorithm is polynomial-time when the dimension of $X$ is not fixed. However, even if $\dim X$ is bounded, it seems that our algorithm will not have polynomial running time. Nevertheless, the contrast with the undecidability for even-dimensional spheres is huge.

In Section~\ref{s:fibrewise}, we briefly discuss an extension of Theorem~\ref{t:extend} to the fibrewise equivariant situation of~\cite{aslep}. In the special case $A=\emptyset$, such an extension implies the decidability of the problem of existence of a $\bbZ/2$-equivariant map $X \to S^{d+1}$ when $d+1$ is odd. The \emph{index} of $X$, denoted $\operatorname{ind}X$, is the smallest $d+1$ for which such an equivariant map $X \to S^{d+1}$ exists; it has many applications in geometry and combinatorics. Thus, with the equivariant version of Theorem~\ref{t:extend}, it is possible to narrow $\operatorname{ind}X$ down to two possible values.

\section{Sets with an action and mappings to abelian groups}

Let $S$ and $T$ be sets with a binary operation ${+}\colon S\times T\to S$ that has a right-sided zero $0\in T$, i.e.\ such that $x+0=x$. We use the bracketing convention $x+y+z=(x+y)+z$. We define a ``derived'' action of $T$ on $S$ by
\[x+\theta y=x+y+\cdots+y.\]
Again, it has a right-sided zero $0$. The following lemma will be our main technical tool.

\begin{lemma}\label{l:difference}
Let $f\colon S\to G$ be an arbitrary mapping of $S$ into an abelian group $G$. Then, for each prime power $\theq=p^\them$ and $\ell_0>0$, there exists $\ell\geq\ell_0$, a function $D_{q,\ell}f\colon S\times T^{\ell}\to G$ such that $D_{\theq,\ell}f(x;y_1,\ldots,y_\ell)=0$ whenever $y_i=0$ for some $i$, and	$\theta>0$ such that
\begin{flalign*}
& & f(x+\theta y) & \equiv f(x)+D_{\theq,\ell}f(x;y,\ldots,y). & (\operatorname{mod}\theq)
\end{flalign*}
In fact, $D_{\theq,\ell}$ is a formal expression in terms of $f$, the action of $T$ on $S$ and the group structure on $G$ and works universally for all $f\colon S\to G$. Moreover, this expresion is computable.
\end{lemma}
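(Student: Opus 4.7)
The plan is to combine two classical ideas: \emph{mixed higher differences} of a function (analogues of mixed partial derivatives), which vanish as soon as any one argument is zero, and a \emph{Newton-type expansion} of $f(x+\theta y)$ in pure $y$-differences, in which a large power of $p$ will kill the low-order binomial coefficients modulo $\theq$. The mixed differences then serve directly as the universal $D_{\theq,\ell}f$.

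First I would introduce, for $y_1,\ldots,y_k\in T$, the iterated difference
\[\Delta_{y_1,\ldots,y_k}f(x)\;:=\;\sum_{I\subseteq\{1,\ldots,k\}}(-1)^{k-|I|}f\!\left(x+\sum_{i\in I}y_i\right),\]
where $x+\sum_{i\in I}y_i$ means that the $y_i$ with $i\in I$ are applied to $x$ in increasing order of their indices. If some $y_j=0$, then for any $I\not\ni j$ one can pair the summand for $I$ with the one for $I\cup\{j\}$: grouping the action as $x'+0+y_{i_{s+1}}+\cdots$ (where $x'$ is the partial action over the indices of $I$ below $j$) and using $x'+0=x'$ shows the two summands have equal $f$-value but opposite signs, so they cancel and $\Delta_{y_1,\ldots,y_k}f(x)=0$. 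On the diagonal $y_1=\cdots=y_k=y$ the sum reduces to the classical forward difference
\[\nabla_y^k f(x)\;=\;\sum_{j=0}^{k}(-1)^{k-j}\binom{k}{j}f(x+jy),\]
and a straightforward induction on $\theta$ gives the Newton-type expansion
\[f(x+\theta y)\;=\;\sum_{k=0}^{\theta}\binom{\theta}{k}\nabla_y^k f(x).\]

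Next I would exploit the standard formula $v_p\!\left(\binom{p^N}{k}\right)=N-v_p(k)$, valid for $1\le k\le p^N$, to pick $N$ so large (e.g.\ $N\ge\them+\lceil\log_p\ell_0\rceil$) that $\binom{p^N}{k}\equiv 0\pmod{\theq}$ for every $1\le k<\ell_0$. Setting $\theta:=p^N$ and $\ell:=\ell_0$, the Newton expansion reduces modulo $\theq$ to
\[f(x+\theta y)\;\equiv\;f(x)\;+\;\sum_{k=\ell}^{\theta}\binom{\theta}{k}\nabla_y^k f(x)\pmod{\theq},\]
and it remains to lift the right-hand side to a function vanishing on zero arguments. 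I define
\[D_{\theq,\ell}f(x;y_1,\ldots,y_\ell)\;:=\;\sum_{k=\ell}^{\theta}\binom{\theta}{k}\,\Delta_{y_1,\ldots,y_{\ell-1},\,y_\ell,\ldots,y_\ell}f(x),\]
where in the $k$-th summand the entry $y_\ell$ is repeated $k-\ell+1$ times, giving $k$ arguments of $\Delta$ in total. Each summand vanishes whenever some $y_i=0$ (that $y_i$ literally appears in the subscript list), and on the diagonal each summand collapses to $\binom{\theta}{k}\nabla_y^k f(x)$, reproducing exactly the truncated Newton sum. Since $D_{\theq,\ell}f$ is visibly a finite formal expression in $f$, the action of $T$ on $S$, and the group operation of $G$, both its universality in $f$ and its computability are immediate; the only step requiring real care is the subset-pairing argument for the vanishing of the mixed difference, where one has to be attentive to the fact that the action is only right-sided and one-sided zero.
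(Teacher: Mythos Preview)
Your argument is correct. The mixed-difference vanishing, the Newton expansion $f(x+\theta y)=\sum_{k=0}^{\theta}\binom{\theta}{k}\nabla_y^kf(x)$, the valuation $v_p\binom{p^N}{k}=N-v_p(k)$, and your packaging of the surviving terms into an $\ell$-variable function by repeating $y_\ell$ all hold exactly as you say, and the only delicate point (the subset-pairing in the presence of a merely right-sided zero) you handle correctly.

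The route, however, is genuinely different from the paper's. The paper does not expand $f(x+\theta y)$ all the way down to order-$k$ differences with $k$ up to~$\theta$; instead it argues by induction on $m$ (with $q=p^m$), writing $f(x+p^{n}\cdot p^{m-1}y)$ via a single order-$\ell$ difference $\Delta_\ell^{(p^{m-1})}$ together with lower terms $f(x+jp^{m-1}y)$, and then invoking the inductive hypothesis on those. In particular the paper always works with differences of the \emph{fixed} order $\ell$ (a power of $p$), only varying the ``scale'' of the action via the superscript~$(\cdot)^{(\theta)}$, and obtains $D_{q,\ell}$ as an integer combination of $\Delta_\ell^{(\cdot)}$'s. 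Your approach is a one-shot argument with no induction, allows $\ell=\ell_0$ exactly (not just a $p$-power $\geq\ell_0$), and is conceptually cleaner; the price is that your $D_{q,\ell}$ involves differences of all orders $\ell\leq k\leq\theta$, so the formal expression is considerably larger (each $\Delta_k$ already has $2^k$ summands). For the application in the paper this size is irrelevant, since only the vanishing-on-the-fat-wedge property and computability are used.
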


We will make a heavy use of higher-order differences
\[
	\Delta_\ell f(x;y_1,\ldots,y_\ell)=\sum_{\substack{%
		0\leq\thek\leq\ell \\
		1\leq i_1<\cdots<i_\thek\leq\ell
	}}(-1)^{\ell-\thek}f(x+y_{i_1}+\cdots+y_{i_\thek}).
\]
Clearly, $\Delta_\ell f(x;y_1,\ldots,y_\ell)=0$ whenever $y_i=0$ for some $i$.

For any formal expression written in terms of the action of $T$ on $S$, we will use a superscript $(-)^{(\theta)}$ to denote the expression obtained by replacing each $x+y$ by $x+\theta y$. In this way, we yield $\Delta_\ell^{(\theta)}f$. The function $D_{\theq,\ell}f$ will be an integral combination of the $\Delta_\ell^{(\theta)}f$.

\begin{proof}
We let $\ell=p^{\themz}$ be any power of $p$ for which $\ell\geq\ell_0$ and $\theta=p^{\themz+\them-1}$. The proof is executed by induction with respect to $\them$. By definition, $f(x+p^{\themz+\them-1}y)$ equals
\[f(x+p^\themz p^{\them-1}y)=\Delta_\ell^{(p^{\them-1})}f(x;y,\ldots,y)-\sum_{j=0}^{p^\themz-1}(-1)^{p^\themz-j}\binom{p^\themz}{j}f(x+jp^{\them-1}y).\]
For $j>0$, write $j=p^\themo j'$ where $j'$ is prime to $p$ and observe that
\[j\binom{p^\themz}{j}=p^\themz\binom{p^\themz-1}{j-1}\]
is divisible by $p^\themz$, so that $p^{\themz-\themo}\mid\binom{p^\themz}{j}$. Setting $\themo+\them=\themz+\themp$, we have either $\themp\leq 0$, in which case $\themz-\themo\geq\them$ and the binomial coefficient is divisible by $\theq=p^\them$, or we obtain for $\theqp=p^\themp$ by induction
\begin{flalign*}
& & f(x+jp^{\them-1}y)=f(x+p^{\themz+\themp-1}j'y)\equiv f(x)+D^{(j')}_{\theqp,\ell}f(x;y,\ldots,y) & & (\operatorname{mod}\theqp)
\end{flalign*}
(this holds even for $j=0$ when the last term is interpreted as $0$). Upon multiplication by $\binom{p^\themz}{j}$, that is divisible by $p^{\themz-\themo}=q/q'$, we obtain even
\begin{flalign*}
& & \binom{p^\themz}{j}f(x+jp^{\them-1}y)\equiv\binom{p^\themz}{j}f(x)+\binom{p^\themz}{j}D^{(j')}_{\theqp,\ell}f(x;y,\ldots,y). & & (\operatorname{mod}\theq)
\end{flalign*}
Since $\sum_{j=0}^{p^\themz-1}(-1)^{p^\themz-j}\binom{p^\themz}{j}=-1$, substituting the previous equation into the first yields
\begin{flalign*}
& & f(x+p^{\themz+\them-1}y) & \equiv \makebox[270pt][l]{$\displaystyle f(x)+\Delta_\ell^{(p^{\them-1})}f(x;y,\ldots,y)-\sum_{j=0}^{p^\themz-1}(-1)^{p^\themz-j}\binom{p^\themz}{j}D^{(j')}_{\theqp,\ell}f(x;y,\ldots,y)$} & \\
& & & =f(x)+D_{\theq,\ell}f(x;y,\ldots,y), & (\operatorname{mod}\theq)
\end{flalign*}
where we set $D_{\theq,\ell}=\Delta_\ell^{(p^{\them-1})}-\sum_{j=0}^{p^\themz-1}(-1)^{p^\themz-j}\binom{p^\themz}{j}D^{(j')}_{\theqp,\ell}$.
\end{proof}

\begin{example}
In this example, we have $\theq=p^\them=4$ and $\ell=4$. Then
\[f(x+8y)=\Delta_4^{(2)}f(x;y,y,y,y)+4f(x+6y)-6f(x+4y)+4f(x+2y)-f(x)\]
and we continue in a similar way with the third term,
\[f(x+4y)=\Delta_4^{(1)}f(x;y,y,y,y)+4f(x+3y)-6f(x+2y)+4f(x+y)-f(x).\]
Substituting into the first equation, we get
\begin{flalign*}
& & f(x+8y)\equiv f(x)+\Delta_4^{(2)}f(x;y,y,y,y)+2\Delta_4^{(1)}f(x;y,y,y,y) & & (\operatorname{mod}4)
\end{flalign*}
and $D_{4,4}f=\Delta_4^{(2)}f+2\Delta_4^{(1)}f$.
\end{example}

\section{Postnikov tower}

We assume that $Y$ is $\thed$-connected simplicial set and has all homotopy groups $\pi_\then Y$ finite for $2\thed<\then<\dim X=\theD$. In the following theorem, $K(\pi,n+1)$ is the Eilenberg-MacLane space and $E(\pi,n)$ its path space; more precisely, we use the canonical minimal models with both simplicial sets minimal and the projection $\delta\colon E(\pi,n)\to K(\pi,n+1)$ a minimal fibration, see~\cite{May}.

\begin{theorem}
For each simply connected simplicial set $Y$, it is possible to construct simplicial sets $P_n$, for $n<\theD$, and a sequence of simplicial maps
\[Y\xlra{\varphi_n}P_n\]
such that $\varphi_{n*}\colon\pi_i(Y)\to\pi_i(P_n)$ is an isomorphism for $i \leq n$ and $\pi_i(P_n)=0$ for $i>n$.

Further, for $2\thed<n<\theD$, it is possible to construct simplicial sets $P_{n,i}$ that fit into a pullback square
\[\xymatrix{
P_{n,i} \ar[r] \ar[d] \pb & E(\cyclic{\qni{n}{i}},n) \ar[d]^-\delta \\
P_{n,i-1} \ar[r]^-{\kni{n}{i}} & K(\cyclic{\qni{n}{i}},n+1)
}\]
with $\qni{n}{i}=\pni{n}{i}^{\mni{n}{i}}$ a prime power (depending on $n$ and $i$; the same applies to $\kni{n}{i}$) and $P_{n-1}=P_{n,0}$, $P_n=P_{n,r}$, where $r$ is some integer that depends on $n$. The composition of the projections $P_{n,i} \to P_{n,i-1}$ for $i=1,\ldots,r$ is a map $p_n \colon P_n \to P_{n-1}$ for which $p_n \varphi_n =\varphi_{n-1}$.
\end{theorem}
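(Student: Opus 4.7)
The plan is twofold: first, to construct a standard minimal Postnikov tower of $Y$; second, to refine each stage at dimension $\then$ into a tower of principal fibrations classified by maps into Eilenberg-MacLane spaces of cyclic prime-power groups. For the first part I appeal to the canonical construction of the minimal Postnikov tower of a simply connected simplicial set, as in~\cite{May}: the stage $P_\then$ together with $\varphi_\then\colon Y\to P_\then$ is obtained inductively so that $p_\then\colon P_\then\to P_{\then-1}$ is realised as the pullback of the minimal path-space fibration $\delta\colon E(\pi_\then Y,\then)\to K(\pi_\then Y,\then+1)$ along a simplicial $k$-invariant $k_\then\colon P_{\then-1}\to K(\pi_\then Y,\then+1)$. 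The required isomorphisms $\varphi_{\then*}\colon\pi_i Y\xra{\cong}\pi_i P_\then$ for $i\leq\then$ and the vanishing $\pi_i P_\then=0$ for $i>\then$ follow from the long exact sequence of the fibration.

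For the second part, the hypothesis $2\thed<\then<\theD$ ensures that $\pi_\then Y$ is a finite abelian group, and hence decomposes into cyclic components of prime-power order,
\[ \pi_\then Y \;\cong\; \bigoplus_{i=1}^{r}\cyclic{\qni{\then}{i}}, \qquad \qni{\then}{i}=\pni{\then}{i}^{\mni{\then}{i}}. \]
Since the minimal models of $K(-,\then+1)$ and $E(-,\then)$ are product-preserving on abelian groups, this induces identifications of simplicial sets
\[ K(\pi_\then Y,\then+1) \;\cong\; \prod_{i=1}^{r} K(\cyclic{\qni{\then}{i}},\then+1), \qquad E(\pi_\then Y,\then) \;\cong\; \prod_{i=1}^{r} E(\cyclic{\qni{\then}{i}},\then), \]
under which $\delta$ decomposes componentwise and $k_\then$ accordingly splits into components $P_{\then-1}\to K(\cyclic{\qni{\then}{i}},\then+1)$.

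Finally, I will build the prime-power tower by pulling back one component at a time. Setting $P_{\then,0}=P_{\then-1}$, define $P_{\then,i}$ inductively as the pullback of $\delta\colon E(\cyclic{\qni{\then}{i}},\then)\to K(\cyclic{\qni{\then}{i}},\then+1)$ along the composite of the projection $P_{\then,i-1}\to P_{\then-1}$ with the $i$-th component of $k_\then$; this composite plays the role of the map $\kni{\then}{i}$ in the statement. Because a finite product of pullback squares is a pullback square of the product fibration, $P_{\then,r}$ is canonically identified with the pullback of the product fibration along $k_\then$, that is, with $P_\then$. The only real obstacle is bookkeeping inside the minimal simplicial framework: one has to ensure that the successive pullbacks remain minimal Kan complexes so that the diagrams in the statement are literal identities rather than mere weak equivalences. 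This follows from the standard fact that the pullback of a minimal fibration along any simplicial map is again minimal.
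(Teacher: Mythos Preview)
Your proposal is correct and takes essentially the same route as the paper: decompose the finite group $\pi_\then Y$ into cyclic prime-power summands, exploit the product decomposition of the minimal models of $K(-,\then+1)$ and $E(-,\then)$, and realize the intermediate $P_{\then,i}$ as iterated pullbacks. The paper's only cosmetic differences are that it cites~\cite{polypost} (rather than~\cite{May}) to stress the algorithmic nature of the construction of the $P_\then$, and that it defines $P_{\then,i}$ in one step as the pullback over $P_{\then-1}$ along the projection $\pr_*\colon K(\pi_\then,\then+1)\to K(\pi_{\then,i},\then+1)$ to the partial sum $\pi_{\then,i}=\cyclic{q_1}\oplus\cdots\oplus\cyclic{q_i}$, which is equivalent to your iterative description.
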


\begin{proof}
The paper~\cite{polypost} gives the simplicial sets $P_n$. To obtain their refinements $P_{n,i}$, we compute a decomposition
\[\pi_n\cong\cyclic{q_1}\oplus\cdots\oplus\cyclic{q_r}\]
of the $n$-th homotopy group into a sum of cyclic groups of prime power orders. Then we define $\pi_{n,i}=\cyclic{q_1}\oplus\cdots\oplus\cyclic{q_i}$ with obvious projections $\pr\colon\pi_n\to\pi_{n,i}$; $P_{n,i}$ is the following pullback
\[\xymatrixb
	{P_{n,i} \ar[rr] \ar[d] \pb & & E(\pi_{n,i},n) \ar[d]^-\delta}
	{P_{n-1} \ar[r]^-{\kn} & K(\pi_n,n+1) \ar[r]^-{\pr_*} & K(\pi_{n,i},n+1)}
\qedhere\] 
\end{proof}

\begin{theorem}\label{t:action}
It is possible to construct an action $x+\Theta y$, $\Theta\gg 0$, of $P_{2\thed}$ on each $P_{n,i}$, for $2\thed \leq n < D$, that has a right-sided zero $0\in P_{2\thed}$. The projections $P_{n,i}\to P_{n,i-1}$ respect this action.
\end{theorem}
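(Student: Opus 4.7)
The plan is to build the action by induction along the Postnikov tower, starting from an action of $P_{2\thed}$ on itself at the base level and lifting successively through the pullback squares defining the higher stages.

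At the base I would exploit the fact that since $Y$ is $\thed$-connected, $P_{2\thed}$ is $\thed$-connected with nontrivial homotopy only in degrees $\thed{+}1,\dots,2\thed$, so the smash $P_{2\thed}\wedge P_{2\thed}$ is $(2\thed{+}1)$-connected. By standard obstruction theory, the fold map $P_{2\thed}\vee P_{2\thed}\to P_{2\thed}$ extends to a simplicial multiplication $\mu\colon P_{2\thed}\times P_{2\thed}\to P_{2\thed}$: each obstruction lies in $H^{\thek+1}(P_{2\thed}\wedge P_{2\thed};\pi_\thek P_{2\thed})$ with $\thek\leq 2\thed$, and vanishes for dimensional reasons. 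After a minor modification to arrange $\mu(x,0)=x$ strictly, iterating $\mu$ a total of $\Theta$ times provides the basic action $x+\Theta y$ at the base.

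For the inductive step $(\then,\thei{-}1)\to(\then,\thei)$, assume the action $\alpha$ on $P_{\then,\thei-1}$ has already been constructed. By the pullback defining $P_{\then,\thei}$, giving an action on $P_{\then,\thei}$ that covers $\alpha$ amounts to producing a simplicial map $P_{\then,\thei}\times P_{2\thed}\to E(\cyclic{\qni{n}{i}},\then)$ lifting $\kni{n}{i}\circ\alpha\circ\pi$ through $\delta$, where $\pi$ is the projection to $P_{\then,\thei-1}$. The obstruction to such a lift is a cohomology class with coefficients in the prime-power cyclic group $\cyclic{\qni{n}{i}}$, and this is the setting of Lemma~\ref{l:difference}: applied with $S=P_{\then,\thei-1}$, $T=P_{2\thed}$, $G=\cyclic{\qni{n}{i}}$ and $f=\kni{n}{i}$, it furnishes a $\theta$ for which $\kni{n}{i}(x+\theta y)\equiv\kni{n}{i}(x)+D_{\theq,\ell}\kni{n}{i}(x;y,\dots,y)\pmod{\qni{n}{i}}$, with $D_{\theq,\ell}\kni{n}{i}$ vanishing whenever any argument is the basepoint of $P_{2\thed}$. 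This vanishing at $0$ is precisely the data required to define the lifted action on the pullback $P_{\then,\thei}$.

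Taking $\Theta$ to be the product of all the $\theta$'s arising at the finitely many stages yields the required global $\Theta\gg 0$. Compatibility with the projections $P_{\then,\thei}\to P_{\then,\thei-1}$ is automatic, since each new action is by construction a lift of the previous one. The main obstacle I expect to face is converting the mod-$\qni{n}{i}$ cohomological vanishing provided by Lemma~\ref{l:difference} into an explicit simplicial formula $P_{\then,\thei}\times P_{2\thed}\to E(\cyclic{\qni{n}{i}},\then)$, rather than just a null-homotopy class. The computability of $D_{\theq,\ell}f$ as a formal universal expression in $f$, together with the simplicial abelian group structure of $E(\cyclic{\qni{n}{i}},\then)$, should make this explicit; carefully aligning the higher-difference combinatorics with the simplicial face and degeneracy maps will be the technical heart of the argument.
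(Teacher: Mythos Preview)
Your inductive outline, base case, and use of Lemma~\ref{l:difference} at each stage all match the paper's proof. But there is a genuine gap at the heart of the inductive step.

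You write that the vanishing of $D_{\theq,\ell}\kni{n}{i}(x;y,\ldots,y)$ at $y=0$ ``is precisely the data required to define the lifted action on the pullback $P_{\then,\thei}$.'' It is not. What you need is a simplicial map $M\colon P_{\then,\thei-1}\times P_{2\thed}\to E(\cyclic{\qni{n}{i}},\then)$ with $\delta M(x,y)=\kni{n}{i}(x+\theta\Theta_{\then,\thei-1} y)-\kni{n}{i}(x)$, i.e.\ a null-homotopy of the difference. Lemma~\ref{l:difference} does \emph{not} supply any cohomological vanishing: it only rewrites that difference as $D_{\theq,\ell}\kni{n}{i}(x;y,\ldots,y)$ and guarantees that the \emph{multi-variable} function $D_{\theq,\ell}\kni{n}{i}(x;y_1,\ldots,y_\ell)$ vanishes whenever some $y_i=0$. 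If you restrict to the diagonal $y_1=\cdots=y_\ell=y$ first, all you know is that the resulting map $P_{\then,\thei-1}\times P_{2\thed}\to K(\cyclic{\qni{n}{i}},\thenpo)$ is zero on $P_{\then,\thei-1}\times\{0\}$; the cofibre is only $\thed$-connected and there is no reason the class in $H^{\thenpo}$ should vanish.

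The missing idea is to exploit the parameter $\ell$ in Lemma~\ref{l:difference} \emph{before} restricting to the diagonal. The full $D_{\theq,\ell}\kni{n}{i}$ vanishes on $P_{\then,\thei-1}\times\{\text{fat wedge}\}$ inside $P_{\then,\thei-1}\times P_{2\thed}^{\ell}$, so it factors through the cofibre $(P_{\then,\thei-1})_+\wedge P_{2\thed}^{\wedge\ell}$. Since $P_{2\thed}$ is $\thed$-connected, this smash is $(\ell(\thed+1)-1)$-connected, and by choosing $\ell$ large (which Lemma~\ref{l:difference} explicitly allows via $\ell_0$) one makes this exceed $\then$; only then does the lift $M'$ to $E(\cyclic{\qni{n}{i}},\then)$ exist. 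One then sets $M(x,y)=M'(x;y,\ldots,y)$ and $(x,c)+\Theta_{\then,\thei}y=(x+\Theta_{\then,\thei}y,\,c+M(x,y))$. So the ``technical heart'' is not aligning combinatorics with simplicial identities, but this connectivity argument; without it the obstruction has no reason to vanish.
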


\begin{proof}
We will construct, by induction with respect to $n$ and $i$, positive integers $\Theta_{n,i}$ and an action $x+\Theta_{n,i}y$ of $P_{2\thed}$ on $P_{n,i}$. The action $x+\Theta y$ from the statement is then obtained by setting $\Theta=\Theta_{\theD-1,r}$ and deriving the action $\Theta_{n,i}$; this is possible since $\Theta_{n,i} \mid \Theta$ by construction. Starting with $n=2\thed$, the paper~\cite{stable} constructs an abelian H-group structure on $P_{2\thed}$, i.e.\ an action of $P_{2\thed}$ on itself; we set $\Theta_{2\thed+1,0}=1$.

For the induction step, we apply Lemma~\ref{l:difference} to the Postnikov invariant $\kni{n}{i}\colon P_{n,i-1}\to K(\cyclic{\qni{n}{i}},\thenpo)$ -- its target is a simplicial abelian group, i.e.\ an abelian group in each dimension. The function
\[D_{\qni{n}{i},\ell}^{(\Theta_{n,i-1})}\kni{n}{i} \colon P_{n,i-1} \times P_{2\thed} \times \cdots \times P_{2\thed} \to K(\cyclic{\qni{n}{i}},\thenpo)\]
(formally, it is not derived from $D_{\qni{n}{i},\ell}\kni{n}{i}$ since $x+y$ is not defined, but we want to emphasize that it is with respect to the action $x+\Theta_{n,i-1}y$) is zero whenever at least one of the components in $P_{2\thed}$ is zero and thus we have a diagram
\[\xymatrix@C=40pt{
P_{n,i-1}\times\{\textrm{fat wedge}\} \ar[r]^-0 \ar@{ >->}[d] & E(\cyclic{\qni{n}{i}},\then) \ar@{->>}[d]^-\delta \\
P_{n,i-1}\times P_{2\thed}\times\cdots\times P_{2\thed} \ar[r]_-{D_{\qni{n}{i},\ell}^{(\Theta_{n,i-1})}\kni{n}{i}} \ar@{-->}[ru]^-{M'} & K(\cyclic{\qni{n}{i}},\thenpo)
}\]
(the fat wedge consists of those $\ell$-tuples $(y_1,\ldots,y_\ell) \in P_{2\thed}\times\cdots\times P_{2\thed}$ with at least one $y_i$ equal to the basepoint $0$). The cofibre of the map on the left is $(P_{n,i-1})_+\wedge P_{2\thed}\wedge\cdots\wedge P_{2\thed}$ and is $(\ell(\thed+1)-1)$-connected. Therefore, when $\ell\gg 0$, a diagonal $M'$ exists; it can be computed as in~\cite{stable}. We define $M(x,y)=M'(x;y,\ldots,y)$, so that
\[\delta M(x,y)=D_{\qni{n}{i},\ell}^{(\Theta_{n,i-1})}\kni{n}{i}(x;y,\ldots,y)=\kni{n}{i}(x+\theta\Theta_{n,i-1}y)-\kni{n}{i}(x),\]
where $\theta$ is the output of Lemma~\ref{l:difference}. Denoting $\Theta_{n,i}=\theta\Theta_{n,i-1}$, this allows us to define a new action on $P_{n,i}\subseteq P_{n,i-1}\times E(\cyclic{\qni{n}{i}},\then)$ by the formula
\[(x,c)+\Theta_{n,i}y=(x+\Theta_{n,i}y,c+M(x,y))\]
(the compatibility holds since $\delta(c+M(x,y))=\delta c+\delta M(x,y)=\kni{n}{i}(x)+(\kni{n}{i}(x+\Theta_{n,i}y)-\kni{n}{i}(x))=\kni{n}{i}(x+\Theta_{n,i}y)$).
\end{proof}

After the following simple observation, we will be ready to prove Theorem~\ref{t:extend}.

\begin{lemma}\label{l:extensions}
For each $g'\colon X\to P_{2d}$ and $2\thed<n<\theD$, it is possible to compute the finite set of homotopy classes of all lifts $g\colon X\to \Pn$.
\end{lemma}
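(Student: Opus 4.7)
The plan is to iterate up the refined Postnikov tower, computing at each intermediate stage the finite set of homotopy classes of lifts. We start from the single class $[g']$ of maps $X \to P_{2\thed}$ and ascend through $P_{2\thed+1,1},\ldots,P_{2\thed+1}=P_{2\thed+2,0},\ldots$, finishing with $P_\then = P_{\then,r}$.

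At each induction step we are looking at a pullback square
\[\xymatrix{
P_{m,j} \ar[r] \ar[d] \pb & E(\cyclic{q_{m,j}}, m) \ar[d]^-\delta \\
P_{m,j-1} \ar[r]^-{k_{m,j}} & K(\cyclic{q_{m,j}}, m+1)
}\]
with $q_{m,j}$ a prime power. Given a simplicial map $h\colon X \to P_{m,j-1}$, a lift to $P_{m,j}$ exists if and only if the obstruction $k_{m,j}\circ h\colon X \to K(\cyclic{q_{m,j}}, m+1)$ is null-homotopic, i.e.\ represents the zero element of the finite abelian group $H^{m+1}(X;\cyclic{q_{m,j}})$. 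Deciding this and, when positive, producing an explicit null-homotopy as a simplicial map $X \to E(\cyclic{q_{m,j}}, m)$ are routine operations in the effective-homotopy framework of \cite{polypost,stable}, because $K(\cyclic{q_{m,j}}, m+1)$ is a simplicial abelian group with finite homotopy. Pairing such a null-homotopy with $h$ through the pullback produces one explicit lift; any two homotopy classes of lifts of $h$ differ by a map $X \to K(\cyclic{q_{m,j}}, m)$, so the set of homotopy classes of lifts of $h$ is either empty or a torsor over the finite abelian group $H^m(X;\cyclic{q_{m,j}})$, and hence finite.

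Carrying out this step for every one of the finitely many classes at the previous stage and taking the union of the resulting torsors yields the finite, algorithmically computable set of homotopy classes at stage $(m,j)$. Repeating this finitely many times from stage $(2\thed{+}1,0)$ up to $(\then,r)$ produces the desired finite set of lifts $X \to \Pn$ of $g'$.

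The main obstacle is not combinatorial but technical: one needs concrete implementations of "compose with a $k$-invariant", "test null-homotopy into $K(\cyclic{q},m+1)$", "choose a null-homotopy" and "enumerate a torsor over $H^m(X;\cyclic{q})$" at the level of simplicial sets. All of these are available from the effective machinery of \cite{polypost} and \cite{stable}, and their outputs are finite throughout because the coefficient groups $\cyclic{q_{m,j}}$ are finite. Since only finitely many stages separate $P_{2\thed}$ from $\Pn$, the procedure terminates.
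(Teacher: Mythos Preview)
Your proof is correct and follows essentially the same approach as the paper: climb the tower stage by stage, using that the homotopy groups in the relevant range are finite to conclude that the set of lifts at each stage is finite and computable. The paper's proof is considerably terser (it iterates over the full stages $P_m$ rather than the refined $P_{m,j}$ and omits the obstruction-theoretic details you spell out), but the idea is identical.

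One small imprecision: the fibre of $[X,P_{m,j}]\to[X,P_{m,j-1}]$ over $[h]$ need not be a \emph{torsor} for $H^m(X;\cyclic{q_{m,j}})$; the action is transitive but not necessarily free, so it is a quotient of that group. This does not affect your argument, since a quotient of a finite group is still finite and the enumeration still covers all homotopy classes (possibly with repetitions, which are harmless for the application in the proof of Theorem~\ref{t:extend}).
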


\begin{proof}
This follows from the fact that each $\pi_n$ is finite for $2\thed<n<\theD$. Namely, since $\pi_{2\thed+1}$ is finite, the number of all lifts of $g'$ to a map $X \to P_{2\thed+1}$ is finite. Thus, it is possible to go through all these partial lifts and compute all their lifts to $P_n$ by recursion.
\end{proof}

\section{Proof of Theorem~\ref{t:extend}}

For $\then=\theD-1$, let $f\colon A\to\Pn$ also denote the composition $A \xra{f} Y \xra{\varphi_n} \Pn$. By the usual obstruction theory, it is enough to check whether an extension to $g\colon X\to\Pn$ exists -- the higher obstructions are all zero. Thus, we consider the Postnikov stage $\Pn$ with an action $x+\Theta y$ by the stage $P_{2\thed}$. Consider the commutative square (the $R$ and $R'$ are the restriction maps while $\Pi_X$ and $\Pi_A$ are post-compositions with the projection $\Pn \to P_{2\thed}$)
\[\xymatrix{
	\rightbox{[g]\in{}}{[X,\Pn]} \ar[r]^-{\Pi_X} \ar[d]_-{R} & [X,P_{2\thed}] \ar[d]^-{R'} \\
	\rightbox{[f]\in{}}{[A,\Pn]} \ar[r]_-{\Pi_A} & \leftbox{[A,P_{2\thed}]}{{}\ni[f']}
}\]
with $[f']=\Pi_A [f]$. We compute the groups on the right explicitly as in~\cite{stable} and consider the subset $H=(R')^{-1}[f']$ of all possible extensions of $f'$ to a map $X\to P_{2\thed}$. There is a finite set $H_0\subseteq H$ such that $H=H_0+\Theta\ker R'$; namely, if $[h_0]\in H$ and we identify $\ker R'\cong\cyclic{q_1}\oplus\cdots\oplus\cyclic{q_r}$ (possibly with some $q_i=0$ giving $\cyclic{0}=\bbZ$), we may take for $H_0$ all $r$-tuples of the form $[h_0]+(z_1,\ldots,z_r)\in H$ with each $|z_i|\leq\Theta/2$.

Suppose first that $g$ is any extension of $f$ and express its image in $[X,P_{2\thed}]$ as $\Pi_X[g]=[h]-\Theta [k]$ with $[h]\in H_0$ and $[k]\in\ker R'$. Then $[\widehat g]=[g]+\Theta [k]\in \Pi_X^{-1}(H_0)$ also gives an extension of $f$ since
\[
	R[\widehat g]=R([g]+\Theta [k])=[f]+\Theta R'[k]=[f],
\]
(the operations in homotopy classes are natural and $[k]\in\ker R'$). Thus, we see that an extension $g$ exists if and only if $[f]\in R\Pi_X^{-1}(H_0)$. This set is finite and its representatives can be computed using Lemma~\ref{l:extensions}. For each $[\widehat f] \in R\Pi_X^{-1}(H_0)$, we may then test whether $[\widehat f]=[f]$ by the main theorem of~\cite{homotopy}.
\qed

\section{A fibrewise equivariant version}\label{s:fibrewise}

The same argument could be repeated in the fibrewise equivariant setup of~\cite{aslep}, though actions with a strict right-sided zero have to be replaced by ones with a weak zero. Denoting $I=\stdsimp 1$, this structure is a map
\[(1 \times P_{n,i} \times_B P_{2\thed}) \cup (I \times P_{n,i} \times_B B) \to P_{n,i}\]
consisting of an action and a homotopy $x \sim x+0$.

The most significant difference lies in the proof of Theorem~\ref{t:action}. The space $P_{n,i-1} \times P_{2\thed} \times \cdots \times P_{2\thed}$ has to be replaced by the following subspace of $I^\ell \times (P_{n,i-1} \times_B P_{2\thed} \times_B \cdots \times_B P_{2\thed})$:
\begin{equation}\label{eq:weak_product}
\bigcup_{\substack{k \geq 0,\\ 1 \leq i_1 < \cdots < i_k \leq \ell}} (d_{i_1}^+\cdots d_{i_k}^+ I^\ell) \times (P_{n,i-1} \times_B \vee_B^{i_1,\ldots,i_k}P_{2\thed}),
\end{equation}
where $d_{i_1}^+\cdots d_{i_k}^+ I^\ell \subseteq I^\ell$ consists of those $\ell$-tuples $(t_1,\ldots,t_\ell)$ with $t_{i_1}=\cdots=t_{i_k}=1$ and where $\vee_B^{i_1,\ldots,i_k}P_{2\thed} \subseteq P_{2\thed} \times_B \cdots \times_B P_{2\thed}$ is formed by those $\ell$-tuples $(y_1,\ldots,y_\ell)$ whose components $y_j$ with $j \notin \{i_1,\ldots,i_k\}$ lie on the zero section $B$. In particular, $\vee_B^\emptyset P_{2\thed} = B \times_B \cdots \times_B B$ and $\vee_B^{1,\ldots,\ell} P_{2\thed} = P_{2\thed} \times_B \cdots \times_B P_{2\thed}$.

The subspace $P_{n,i-1} \times \{\textrm{fat wedge}\}$ is replaced by the subspace of \eqref{eq:weak_product} formed by those elements whose component in $I^\ell$ has at least one component equal to $0$. By the methods of~\cite{aslep}, it is then easy to equip this pair with effective homology, compute the variation of the map $M'$ from the proof of Theorem~\ref{t:action} and use it to define a new weak action of $P_{2\thed}$ on $P_{n,i}$.

\vskip 20pt
\vfill
\vbox{\footnotesize%
\noindent\begin{minipage}[t]{0.45\textwidth}
{\scshape
Luk\'a\v{s} Vok\v{r}\'inek}
\vskip 2pt
Department of Mathematics and Statistics,\\
Masaryk University,\\
Kotl\'a\v{r}sk\'a~2, 611~37~Brno,\\
Czech Republic
\vskip 2pt
\url{koren@math.muni.cz}
\end{minipage}
}

\end{document}